\def\BibTeX{{\rm B\kern-.05em{\sc i\kern-.025em b}\kern-.08em
    T\kern-.1667em\lower.7ex\hbox{E}\kern-.125emX}}
\newtheorem{theorem}{Theorem}
\newtheorem{definition}{Definition}
\newtheorem{proposition}{Proposition}
\newtheorem{lemma}{Lemma}
\newtheorem{remark}{Remark}
\newtheorem{standing}{Standing Assumption}
\newcommand{\norm}[1]{\left\lVert#1\right\rVert}
\newcommand{\R}{\mathbb{R}}
\newcommand{\N}{\mathbb{N}}
\newcommand{\E}{\mathbb{E}}
\newcommand{\PP}{\mathbb{P}}
\newcommand{\mc}[1]{\mathcal{#1}}
\newcommand{\eqdef}{\coloneqq}
\newcommand{\prob}[2][]{
	\ifthenelse{
		\isempty{#1}}{\mathbb{#2}}{\mathbb{#2}\left\{#1\right\}
	}
}
\newcommand{\col}{\mathrm{col}}
\newcommand{\set}[2]{\left\{ #1\ \left| \ #2 \right. \right\}}
\newcommand{\bs}[1]{\boldsymbol{#1}}
\newcommand{\bsone}{\boldsymbol{1}}
\newcommand{\blue}[1]{{\leavevmode\color{blue}{#1}}}
\newcommand{\op}{\operatorname}
\newcommand{\normsq}[1]{\lVert#1\rVert^2}
\newacronym{iid}{i.i.d.}{independent and identically distributed}
\newacronym{wrt}{w.r.t.}{with respect to}
\newacronym{wlog}{w.l.o.g.}{without loss of generality}
\newacronym{PAC}{PAC}{probably approximately correct}
\newacronym{SNEP}{SNEP}{stochastic Nash equilibrium problem}
\newacronym{SNE}{SNE}{stochastic Nash equilibrium}
\newacronym{SGD}{SGD}{stochastic gradient descent}
\newacronym{NE}{NE}{Nash equilibrium}
\newacronym{GNE}{GNE}{generalized Nash equilibrium}
\newacronym{GNEP}{GNEP}{generalized Nash equilibrium problem}
\newacronym{SGNEP}{SGNEP}{stochastic generalized Nash equilibrium problem}
\newacronym{SGNE}{SGNE}{stochastic generalized Nash equilibrium}
\newacronym{v-SGNE}{v-SGNE}{variational equilibrium}
\newacronym{KKT}{KKT}{Karush–Kuhn–Tucker}
\newacronym{FB}{FB}{forward-backward}
\newacronym{FBF}{FBF}{forward-backward-forward}
\newacronym{EG}{EG}{extragradient}
\newacronym{BR}{BR}{best-response}
\newacronym{MSE}{MSE}{mean squared error}
\newacronym{MLE}{MLE}{maximum likelihood estimation}
\newglossaryentry{VI}
{
	name={VI},
	description={variational inequality},
	first={\glsentrydesc{VI} (\glsentrytext{VI})},
	plural={VIs},
	descriptionplural={variational inequalities},
	firstplural={\glsentrydescplural{VI} (VIs)}
}
\newacronym{DR}{DR}{Douglas-Rachford}
\newacronym{SAA}{SAA}{sample average approximation}
\newacronym{SA}{SA}{stochastic approximation}
\newacronym{PEV}{PEV}{plug-in electric vehicle}
\newacronym{DSO}{DSO}{distribution system operator}
\newacronym{EV}{EV}{electric vehicle}
\newacronym{LS}{LS}{least-squares}
\newglossaryentry{SVI}
{
	name={SVI},
	description={stochastic variational inequality},
	first={\glsentrydesc{SVI} (\glsentrytext{SVI})},
	plural={SVIs},
	descriptionplural={stochastic variational inequalities},
	firstplural={\glsentrydescplural{SVI} (SVIs)}
}
\begin{document}

\title{Actively learning equilibria in Nash games\\ with misleading information}
\author{Barbara Franci, Filippo Fabiani and Alberto Bemporad
\thanks{This work was partially supported by the European Research Council (ERC), Advanced Research Grant COMPACT (Grant Agreement No. 101141351).
B. Franci is with the Department of Mathematical Sciences, Politecnico di Torino, Corso Duca degli Abruzzi 24, 10129, Turin, italy ({\tt \footnotesize barbara.franci@polito.it}). F. Fabiani and A. Bemporad are with the IMT School for Advanced Studies Lucca, Piazza San Francesco 19, 55100, Lucca, Italy 
({\tt \{filippo.fabiani, alberto.bemporad\}@imtlucca.it}). 
}
}


\maketitle         
\thispagestyle{empty}
\pagestyle{empty}

\begin{abstract}
	We develop a scheme based on active learning to compute equilibria in a \gls{GNEP}. Specifically, an external observer (or entity), with little knowledge on the multi-agent process at hand, collects sensible data by probing the agents' \gls{BR} mappings, which are then used to recursively update local parametric estimates of these mappings.
	Unlike \cite{fabiani2024active}, we consider the realistic case in which the agents share corrupted information with the external entity for, e.g., protecting their privacy. Inspired by a popular approach in stochastic optimization, we endow the external observer with an inexact proximal scheme for updating the local \gls{BR} proxies. This technique will prove key to establishing the convergence of our scheme under standard assumptions, thereby enabling the external observer to predict an equilibrium strategy even when relying on masked information.
\end{abstract}

\begin{IEEEkeywords}
	Multi-agent systems, Active learning, Competitive decision-making, Stochastic optimization.
\end{IEEEkeywords}

\glsresetall

\section{Introduction}
\label{sec:intro}
%
%

\IEEEPARstart{P}{redicting} a possible outcome in problems involving self-interested and privacy-preserving agents is a key requirement for their indirect control. As a prominent example, a \gls{DSO} ideally wishes to exploit the flexibility offered by the widely spread smart-home appliances and \glspl{EV} for an efficient usage of the distribution grid. To this end, a \gls{DSO} typically designs energy prices to induce a certain collective consumption profile of the end-users, which can be predicted in advance only if these users are willing to share sensitive information \cite{corradi2012controlling,bilenne2022privacy}.

Akin to \cite{fabiani2024active}, in this paper we take the perspective of an external observer interested in learning a so-called \gls{GNE} for a population of selfish agents taking part to a \gls{GNEP}. Given its little knowledge on the multi-agent process at hand, such an external observer is only allowed for querying the \gls{BR} mappings held by the agents. The latter, however, may be intentionally reluctant to reveal private information exactly, could erratically change their mind when presented with the same scenarios, may optimize their individual objectives with scarce accuracy, or the communication channels might be imperfect. For these reasons, we assume that the information passed to the external observer is masked by noise, as may happen in economic models \cite{renault2004communication}, competitive versions of multi-agent feedback controller synthesis \cite{crusius1999sufficient}, or signal processing \cite{mochaourab2012robust}. We then design an active learning-based scheme \cite{Set12,Bem23} for the external entity that, despite the misleading information collected, allows to predict a \gls{GNE} via faithful approximations of the agents' \glspl{BR}.

Learning an equilibrium strategy from a centralized perspective based on noisy information has been considered in that branch of literature denoted as {\it simulation-based game theory}.
Several works \cite{viqueira2019learning,areyan2020improved,marchesi2020learning} indeed proposed different schemes to approximate the original matrix games and associated equilibria by leveraging noisy samples of agents' costs provided by an oracle. Existing techniques addressing simulated matrix games with finite decision sets include also stochastic \cite{vorobeychik2008stochastic} and sample-average approximation \cite{vorobeychik2010probabilistic}, as well as methods based on Bayesian optimization \cite{al2018approximating,picheny2019bayesian}. While the former analyze the asymptotic properties of equilibria obtained from simulation-based models, also attaching probabilistic certificates on their approximation quality, the latter leverage statistical modeling tools acting as emulators of the agents' costs. Tailored acquisition functions for equilibrium learning are then designed based on the resulting posterior distributions.

In contrast, we design an active learning procedure for an external entity that  iteratively makes suitable queries to estimate the \gls{BR} mappings held by the agents, aiming at an exact prediction of a \gls{GNE} for the \gls{GNEP} in which they take part (\S \ref{sec:problem_formulation}). To deal with a possibly misleading information provided by the agents, we take inspiration from a popular approach in stochastic optimization to let the external observer update the local \gls{BR} proxies with noisy data by means of an inexact proximal scheme. This will prove to be a key tool for learning a \gls{GNE}, as well as to accompany the overall scheme with convergence guarantees under common assumptions. 

Our main contributions can be summarized as follows:
\begin{enumerate}
	\item[i)] We propose a stochastic variant of the active learning scheme derived in \cite{fabiani2024active} (\S \ref{sec:active_learning_scheme}). Our iterative algorithm is based on an inexact proximal update to learn the parameters approximating the \gls{BR} mappings of the agents;
	\item[ii)] Under standard assumptions \cite{lei2020synchronous,facchinei201012}, we show how these parameters can be learned exactly. Besides improving the results of \cite{fabiani2024active}, where such a condition was identified as sufficient for the convergence of the overall scheme and only verified ex-post, it is instrumental for proving that the external entity can asymptotically predict a \gls{GNE} of the underlying \gls{GNEP} (\S \ref{sec:convergence_analysis}).
\end{enumerate}
We finally discuss practical implementation details, which are then used to test our algorithm on a numerical case study involving the indirect control of a population of \glspl{EV} that tries to optimize the collective day-ahead charging schedule (\S \ref{sec:details_simulations}).

\subsubsection*{Notation}
$\N$, $\R$ and $\R_{\geq 0}$ denote the set of natural, real, and nonnegative real numbers, respectively. $\N_0 \eqdef \N \cup \{0\}$.
For a vector $v \in \R^n$, $\|v\|_2$ denotes the standard Euclidean norm.
The operator $\col(\cdot)$ stacks its arguments in column vectors or matrices of compatible dimensions.
For example, given vectors $x_1,\dots,x_N$ with $x_i\in\mathbb{R}^{n_i}$ and $\mc I=\{1,\dots,N \}$, we denote $\bs{x} \eqdef (x_1 ^\top,\dots ,x_N^\top )^\top = \col((x_i)_{i\in\mc I}) \in \R^n$, $n \eqdef \sum_{i\in \mc I} n_i$, and $ \bs{x}_{-i} \eqdef \col(( x_j )_{j\in\mc I\setminus \{i\}})$, where $(\cdot)^\top$ denotes the transpose. Abusing notation, we also use $\bs x = (x_i, \bs x_{-i})$. 
With $\E_{\PP}[\bs z]=\op{col}(\E_{\PP_i}(z_i))_{i\in\mc I}$ we consider the stacked vector $\bs z$ and apply the expected value component-wise. The uniform distribution on $[a,b]$ is denoted by $\mc U(a,b)$, while the normal distribution with mean $\mu$ and variance $\sigma^2$ by $\mc N(\mu,\sigma^2)$. 

\section{Problem formulation}\label{sec:problem_formulation}

A \gls{GNEP} involves $N$ self-interested agents, indexed by the set $\mc I\eqdef\{1,\dots,N\}$, where each of them controls a decision variable $x_i \in \R^{n_i}$. Their aim is to minimize a local cost function $J_i : \R^n \to \R$, $n\eqdef\sum_{i\in\mc I} n_i$, subject to both local and coupling constraints. As such, a \gls{GNEP} can be written as a collection of mutually-coupled optimization problems \cite{facchinei201012}:
\begin{equation}\label{eq_game}
	\forall i\in\mc I:\left\{
	\begin{aligned}
		&\underset{x_i \in \mc X_i}{\textrm{min}} & & J_i(x_i,\bs x_{-i})\\
		&~\textrm{ s.t. } & & (x_i, \bs x_{-i}) \in \Omega.
	\end{aligned}	
	\right.
\end{equation}

Thus, each cost function $J_i(x_i,\bs x_{-i})$ depends not only on the local variable $x_i$, but also on the decisions of the other agents, $\bs x_{-i}=\op{col}((x_j)_{j\in\mc I \setminus \{i\}})$.
For every agent $i\in\mc I$, $\mc X_i$ represents the set of local constraints, while the coupling constraint set is $\Omega\subseteq\R^n$. The collective feasible set of the \gls{GNEP} in \eqref{eq_game} is then given by 
$\Omega\cap\mc X$, with $\mc X\eqdef\prod_{i\in\mc I} \mc X_i$, and the feasible decision set for agent $i\in\mc I$, parametric in $\bs x_{-i}$, is
$\mc X_i(\bs x_{-i})=\set{x_i \in \mc X_i}{(x_i, \bs x_{-i}) \in \Omega}$.
A popular solution concept for a \gls{GNEP} is the so-called \gls{GNE}, defined next:
\begin{definition}\label{def_gne}
	A collective decision vector $\bs x^\star\in\Omega\cap\mc X$ is a \gls{GNE} of the \gls{GNEP} in \eqref{eq_game} if, for all $i \in \mc I$,
	$J_i(x^\star_i,\bs x^\star_{-i})\leq \textnormal{\textrm{min}}_{x_i\in\mc X_i(\bs x^\star_{-i})}~J_i(x_i,\bs x^\star_{-i})$.
	\hfill$\square$
\end{definition}
Roughly speaking, at a \gls{GNE}, none of the agents has incentive to deviate from the strategy currently taken. 
In the considered game-theoretic framework, a quantity of interest is represented by the agent's \gls{BR} mapping, formally defined as:
\begin{equation}\label{eq_ar_br}
	f_i(\bs x_{-i})\eqdef\underset{x_i\in\mc X(\bs x_{-i})}{\textrm{argmin}}~J_i(x_i,\bs x_{-i}).
\end{equation}
In words, each $f_i:\R^{n_{-i}}\rightrightarrows\R^{n_i}$, $n_{-i}\eqdef\sum_{j \in \mc I \setminus \{i\}} n_j$, expresses what is the best set of decisions agent $i$ can take, given the current decision of its opponents $\bs x_{-i}$. It is also instrumental to characterize a \gls{GNE}, since $\bs x^\star$ can be equivalently defined as a collective fixed point of the agents' \gls{BR} mappings, i.e., $x_i^\star\in f_i(\bs x_{-i}^\star)$, for all $i\in\mc I$.


While not particularly restrictive, the following conditions ensure the existence of at least a \gls{GNE} \cite{facchinei201012}:
\begin{standing}[\textup{\gls{BR} mappings and constraints}]\label{standing:standard}
	For all $i \in \mc I$, $f_i : \R^{n_{-i}} \to \R^{n_i}$ is single-valued and continuous, with each $x_i\mapsto J_i(x_i, \bs x_{-i})$ convex on $\mc X_i(\boldsymbol{x}_{-i})$. The collective feasible set $\Omega\cap\mc X \subseteq\R^n$ is nonempty, convex\blue{,} and bounded.
	\hfill$\square$
\end{standing}

In this framework, we assume that the external entity has no knowledge about the private
functions $J_i$ but can probe the agents' \gls{BR} mappings in order to collect data and predict an equilibrium strategy~$\bs x^\star$.
Unlike \cite{fabiani2024active}, however, we assume here that instead of communicating the exact \gls{BR}, each agent shares a noisy information $z_i=\tilde f_i(\bs x_{-i},\eta_i)$ with the external entity. Specifically,
$\eta_i:\Xi_i\to\R^d$ is a random vector defined on the probability space $(\Xi_i,\mc F_i,\PP_i)$ with \emph{unknown} probability distribution to all parties involved. 


As commonly assumed in a stochastic framework \cite{lei2020synchronous,lei2022distributed}, we postulate next a condition on the bias associated to $z_i$:
\begin{standing}[\textup{Unbiased noisy information}]\label{ass_zeta}
For all $i\in\mc I$ and $\bs x_{-i}\in\R^{n_{-i}}$, it holds that $\E_{\PP_i}[z_i]=x_i$, i.e., $\E_{\PP_i}[\tilde f_i(\bs x_{-i},\eta_i)]=f_i(\bs x_{-i})$. 
\hfill$\square$
\end{standing}

Considering noisy \glspl{BR} provides a more practical setting in which the agents, intentionally or unintentionally,
do not share exact best responses, for some of the reasons we described in \S \ref{sec:intro}. However,
Standing Assumption \ref{ass_zeta} postulates, realistically, that the agents have no interest in boycotting the central entity with its prediction task, i.e., they are not intentionally malicious: it is each agent's own interest to achieve
an agreement with the other agents.

The external entity, equipped with some learning procedure $\mathscr{L}$, shall then predict a \gls{GNE} by leveraging possibly misleading, yet non-private, information.
Specifically, let us consider an estimate $\hat f_i : \R^{n_{-i}} \times \R^{p_i} \to \R^{n_i}$ of the $i$-th \gls{BR} mapping $f_i(\cdot)$. This \gls{BR} proxy is parametrized by $\theta_i \in \Theta_i\subseteq\R^{p_i}$, a quantity that shall be updated iteratively by integrating the data obtained from the agents through a smart query process, which will be described in the next section. 
\begin{standing}[\textup{Parameter set and \gls{BR} proxies}]\label{ass_param}
For all $i\in\mc I$, it holds that:
\begin{enumerate}
	\item[(i)] $\Theta_i$ is a closed, compact, and convex set;
	\item[(ii)] The mapping $\theta_i \mapsto \hat f_i(\bs x_{-i}, \theta_i)$ is continuous.
	\hfill$\square$
\end{enumerate}
\end{standing}
While not postulated in \cite{fabiani2024active}, in our stochastic framework we need Standing Assumption~\ref{ass_param}.(i) to restrict the set of parameters, thereby ensuring that the learning procedure can compensate for the noise. 
This will be key to establishing the asymptotic convergence of the parameters, thus improving over \cite{fabiani2024active}, where this condition was identified as sufficient for concluding on the convergence of the overall procedure.

\begin{algorithm*}[!t]
	\caption{Active learning-based method with misleading information}\label{alg:learning_embedded_BR}
	\smallskip
	\textbf{Initialization:} $\bs x^0 \in \Omega$, $\theta_i^0 \in \R^{p_i} \text{ for all } i \in \mc I$\\
	\smallskip
	\textbf{Iteration $(k \in \N_0)$:} \\
	\vspace{.1cm}
	\hspace{.5cm}$\bullet$ External entity computes
		\begin{equation}\label{eq_inexact_br}
		\theta_i^{k+1} \in \set{\xi\in\Theta_i}{\E_{\PP_i}[\normsq{\xi-\hat\theta_i(z_i^k,\hat{\bs x}_{-i}^k,\theta_i^k)}|\mc F^k]\leq(\alpha_i^k)^2 \text{ a.s.}} \text{ for all } i \in \mc I
		\end{equation}
		
		\vspace{.1cm}
		\hspace{.5cm}$\bullet$ External entity defines $\mc M(\theta^{k+1}) \eqdef 	\textrm{argmin}_{\bs x \in \Omega\cap\mc X} \; \sum_{i\in \mc I} \left\|x_i - \hat f_i(\bs x_{-i}, \theta^{k+1}_i) \right\|^2_2$, and computes
		\begin{equation}\label{eq_min_norm}
		\hat{\bs x}^{k+1}  = \underset{\bs x \in \R^n}{\textrm{argmin}}\left\{\tfrac{1}{2}\|\bs x\|_2^2~\textrm{ s.t. }~\bs x \in \mc M(\theta^{k+1})\right\} 
		\end{equation}
	
	\vspace{.1cm}
	\hspace{.5cm}$\bullet$ External entity collects corrupted \glspl{BR}, for all $i\in\mc I$:
		$$z^{k+1}_i = \tilde f_i(\hat{\bs x}^{k+1}_{-i},\eta_i^{k+1})$$
\end{algorithm*}

%


\section{Active learning with misleading information}\label{sec:active_learning_scheme}
The proposed active \gls{GNE} learning scheme is summarized in
Algorithm~\ref{alg:learning_embedded_BR}.
Note that in the last step the agents act as oracles, i.e., they provide samples consisting of noisy \glspl{BR} that the external observer uses for learning.
Specifically, at every iteration $k$ the external entity integrates samples just collected to perform an inexact update of the \gls{BR} proxies as in \eqref{eq_inexact_br}. In fact, given $\bs \eta^t=\op{col}((\eta_i^t)_{i\in\mc I})$ at the generic $t$-th iteration, $\mc F^k$ is there defined according to the filtration $\mc F=\{\mc F^k\}_{k\in\N}$, i.e., the family of $\sigma$-algebras with $\mathcal{F}^{0} = \sigma\left(X^{0}\right)$ and $\mathcal{F}^{k} = \sigma\left(X^{0}, \bs\eta^{1}, \bs\eta^{2}, \ldots, \bs\eta^{k}\right)$ such that $\mc F^k\subseteq\mc F^{k+1}$ for all $k\in\N$. In words, $\mc F^k$ contains the information up to iteration $k$. 

If each probability distribution $\mathbb{P}_i$ was known, the external entity would ideally implement the following proximal rule:
\begin{equation}\label{eq_exact_br}
	\hat\theta_i(z_i,\hat{\bs x}_{-i},\theta_i) \in \underset{\xi_i \in \Theta_i}{\textrm{argmin}} \left\{L_i(\xi_i|z_i,\hat{\bs x}_{-i})+\frac{\mu}{2}\normsq{\xi_i-\theta_i}_2\right\},
\end{equation} 
for all $i \in \mc I$, $\mu>0$, where in particular
\begin{equation}\label{eq_L}
L_i(\theta_i | z_i,\hat{\bs x}_{-i})=\E_{\PP_i}[\ell_i(z_i, \hat f_i(\hat{\bs x}_{-i}, \theta_i))].
\end{equation}
However, since each $\E_{\PP_i}$ is unavailable, we propose for the external entity to focus on \eqref{eq_inexact_br} as a viable option.

The loss function $L_i : \Theta_i \to \R$ measures the dissimilarity between the information received via $z_i$, and the estimate $\hat f_i$. Note that $L_i$ does not depend explicitly on $x_i$ and $\bs x_{-i}$, since those are quantities provided through samples. In addition, $\ell_i : \Theta_i\times\Xi_i \to \R$ depends on $\eta_i$ via $z_i$, hence the expected value \gls{wrt} $\PP_i$. We then impose what follows:



\begin{standing}[\textup{Training loss function}]\label{ass_diff}
For all $i\in\mc I$, the following conditions hold true:
\begin{enumerate}
	\item[(i)] The mapping $\theta_i\mapsto L_i(\theta_i|x_i,\bs x_{-i})$ is convex and twice continuously differentiable;
	\item[(ii)] The mapping $\theta_i\mapsto \ell_i(z_i, \hat f_i(\hat{\bs x}_{-i}, \theta_i))$ is differentiable;
	\item[(iii)] For all $(x_i, \bs x_{-i}, \theta_i) \in \Omega \cap \mc X \times \Theta_i$, $0 \le  L_i(\theta_i|x_i,\bs x_{-i}) < \infty$, with $ L_i(\theta_i|x_i,\bs x_{-i})= 0 \iff x_i = \hat f_i(\bs x_{-i}, \theta_i)$.
	\hfill$\square$
\end{enumerate}
\end{standing}
While Standing Assumption \ref{ass_diff} actually turns the inclusion in \eqref{eq_exact_br} into equality, the following condition, postulated also in \cite{lei2020synchronous, lei2022distributed}, will be key for our convergence analysis:
\begin{standing}[\textup{Proximal map}]\label{ass_contractive}
The proximal mapping $\hat\theta_i(\cdot,\cdot,\theta_i)$ in \eqref{eq_exact_br} is $a$-contractive, $a\in(0,1)$, i.e., for all $\theta_i, \theta'_i \in  \Theta_i$, $\|\hat\theta(\cdot,\cdot,\theta_i)-\hat\theta(\cdot,\cdot,\theta'_i)\|\leq a\|\theta_i-\theta'_i\|$.
\hfill$\square$
\end{standing}
We postulate this property as an assumption, although sufficient conditions guaranteeing the contractivity of $\hat\theta_i(\cdot,\cdot,\theta_i)$ can be obtained similarly to \cite[Prop.~12.17]{facchinei201012} and \cite[\S 2.2]{lei2020synchronous}. However, since the external entity does not know the probability distribution $\PP_i$ of the noise, the expected value in \eqref{eq_exact_br}--\eqref{eq_L} can not be computed exactly. 
This is the reason why, inspired  by \cite{lei2020synchronous}, we propose an inexact scheme as described in \eqref{eq_inexact_br}. In fact, such an instruction is asymptotically equivalent to the exact proximal mapping in \eqref{eq_exact_br}, but it can be computed through the iterations. The parameters $\alpha_i^k$ in \eqref{eq_inexact_br}, instead, form a deterministic sequence that meets the following conditions:
\begin{standing}[\textup{Accuracy sequence}]\label{ass_step}
For all $i\in\mc I$, the sequence $\{\alpha_i^k\}_{k\in\N}$  is such that $\sum_{k\in\N_0}\alpha_i^k<\infty$ and, for all $k\in\N_0$, $\alpha_i^k\geq0$  and $\lim_{k\to\infty}\alpha_i^k=0$.
\hfill$\square$
\end{standing}

\begin{remark}\label{rem:inexact_proximal}
In \cite{lei2020synchronous}, a stochastic approximation method is used to obtain a solution to \eqref{eq_inexact_br} which is $\alpha_i^k$-close to the exact one of \eqref{eq_exact_br}. As a consequence of Standing Assumption \ref{ass_step}, convergence to the exact solution holds (see Lemma \ref{lemma:local_exactness}). This consists in performing a number of stochastic proximal gradient descent steps, proportional to the outer iteration index $k$ of Algorithm \ref{alg:learning_embedded_BR} \cite[\S 3.4]{lei2020synchronous}.
In particular, at iteration $k$, for all $i\in\mc I$, the external entity performs the following steps for $t>0$:
\begin{equation}\label{eq:proximal_gradient}
		\begin{aligned}
		\xi_i^{t+1}=\op{proj}_{\Theta_i}(\xi_i^t-\gamma^t&(\tfrac{1}{S}\textstyle\sum_{j=1}^S\nabla_{\theta_i}\ell_i(z_i^{k,j},\hat f_i(\hat{\bs x}_{-i}^k, \xi^t_i))\\
		&+\mu(\xi_i^t-\theta_i^k))),
	\end{aligned}
\end{equation}
with $\gamma^t$ being a vanishing step-size sequence and $\{z_i^{k,j}\}_{j=1}^S$ being a collection of $S$ samples of the noisy queries. The iterative procedure stops, 
say after $\bar t$ iterations, and sets $\theta_i^{k+1}=\xi_i^{\bar t}$.
In this case, some further assumption on the expected-valued gradients should be considered---see, e.g., \cite[Ass.~1.(c), 1.(d)]{lei2020synchronous}.
Other algorithms can be however used and integrated with different approximation schemes.
\hfill$\square$
\end{remark}


By leveraging the \gls{BR} surrogates updated through \eqref{eq_inexact_br}, the external entity then designs the next query point $\hat{\bs x}^{k+1}$ to collect new information from the agents according to \eqref{eq_min_norm}, i.e., as the minimum norm strategy profile falling into the set:
\begin{equation}\label{eq:minimizers}
	\mc M(\theta^{k+1}) \eqdef 	\underset{\bs x \in \Omega\cap\mc X}{\textrm{argmin}} \; \sum_{i\in \mc I} \left\|x_i - \hat f_i(\bs x_{-i}, \theta^{k+1}_i) \right\|^2_2,
\end{equation}
where $\mc M:\R^p\rightrightarrows\Omega$, $p \eqdef \sum_{i\in \mc I} p_i$. This set contains all collective profiles that are the closest to a fixed point of each $\hat f_i(\cdot, \theta_i^{k+1})$, i.e., closest to a \gls{GNE} as defined in Definition~\ref{def_gne} and discussion following \eqref{eq_ar_br}. Indeed, if each $\hat f_i(\bar{\bs x}_{-i}, \bar\theta^k_i)$ was exactly equal to $f_i(\bar{\bs x}_{-i})$, and the minimum in~\eqref{eq:minimizers} was identically zero, then $\bar{\bs x}\in\mc M(\bar\theta^k)$ would be a \gls{GNE} of the \gls{GNEP} in \eqref{eq_game}. Such a smart selection of the query points amounts to the ``active'' part of Algorithm~1, and allows the central entity to accumulate (noisy) information in a neighborhood of a point that is the closest to a true \gls{GNE}. This will be key for the technical analysis carried out in the next section.
In \eqref{eq:minimizers}, $\theta^{k+1}$ represents the whole collection of parameters $\{\theta_i^{k+1}\}_{i\in\mc I}$ characterizing the estimate mappings, which at every iteration coincides with the argument of the corresponding parameter-to-query mapping $\mc M(\cdot)$. It then follows from the definition of $\mc M$ and from Standing Assumption \ref{ass_zeta} that $(\E_\PP[z_i],\hat{\bs x}_{-i})\in\Omega\cap\mc X$. 
Once obtained the minimum norm vector $\hat{\bs x}^{k+1}$ in \eqref{eq_min_norm}, the external entity queries each agent with $\hat{\bs x}_{-i}$, which in turn reacts through a noisy \gls{BR} $z^{k+1}_i = \tilde f_i(\hat{\bs x}^{k+1}_{-i},\eta_i^{k+1})$. The observer finally collects all these data, and the process repeats.

\section{Convergence analysis}\label{sec:convergence_analysis}
Before studying the asymptotic properties of the active learning procedure in Algorithm~\ref{alg:learning_embedded_BR}, we postulate some assumptions on the learning procedure $\mathscr{L}$. We then prove some preliminary results, functional to the asymptotic analysis.





In particular, our analysis will be based on the possibility of matching pointwise the \gls{BR} mapping $f_i$ of each agent. 
To this aim, for all $i \in \mc I$ and for all $(x_i, \bs x_{-i}) \in \Omega\cap\mc X$, let
$$\mc A_i(x_i, \bs x_{-i}) =\{\tilde\theta_i\in\Theta_i|L_i(\tilde\theta_i|x_i, \bs x_{-i}) = 0\}.$$


This set is instrumental to prove the following crucial result: 


\begin{lemma}\label{lemma:local_exactness}
	For all $i \in \mc I$, let $\{\theta_i^k\}_{k\in\N}$ be the sequence generated by \eqref{eq_inexact_br} in Algorithm \ref{alg:learning_embedded_BR}.
	If $\lim_{k \to \infty} \E_{\PP_i}[z_i^k] = \bar{x}_i$, $\lim_{k \to \infty} \hat{\bs x}^k_{-i} = \bar{\bs x}_{-i}$ so that $(\bar{x}_i, \bar{\bs x}_{-i})\in\Omega\cap\mc X$, then for all $i\in\mc I$, $\lim_{k \to \infty} \theta_i^k = \bar\theta_i$, and $\lim_{k\to\infty}\E_{\PP_i}\left[\norm{\theta_i^k-\bar\theta_i}\right]=0$ a.s.. Moreover, $\bar\theta_i \in \mc A_i(\bar{x}_i, \bar{\bs x}_{-i})$, i.e., \eqref{eq_inexact_br} converges to a solution of the exact proximal scheme in \eqref{eq_exact_br}.
	\hfill$\square$
\end{lemma}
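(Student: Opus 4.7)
The plan is to first identify a candidate limit $\bar\theta_i$ as the unique fixed point of the exact limit proximal operator $T(\theta)\eqdef\hat\theta_i(\bar x_i,\bar{\bs x}_{-i},\theta)$ and then to show that the inexact iterates are attracted to it. By Standing Assumption~\ref{ass_contractive}, $T$ is $a$-contractive on the compact convex set $\Theta_i$ (Standing Assumption~\ref{ass_param}(i)), so Banach's fixed-point theorem delivers a unique $\bar\theta_i\in\Theta_i$. Since, by first-order optimality for the strongly convex problem~\eqref{eq_exact_br} with $\mu>0$, a fixed point of the proximal map is exactly a minimizer of $L_i(\cdot\,|\,\bar x_i,\bar{\bs x}_{-i})$, Standing Assumption~\ref{ass_diff}(iii)---combined with the realizability implicit in the convergence hypothesis---guarantees that this minimum equals zero, hence $\bar\theta_i\in\mc A_i(\bar x_i,\bar{\bs x}_{-i})$, which simultaneously identifies $\bar\theta_i$ as a solution of~\eqref{eq_exact_br}.

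To prove convergence of the iterates, I would rely on the one-step decomposition
\[
\|\theta_i^{k+1}-\bar\theta_i\|\leq e_k+a\|\theta_i^k-\bar\theta_i\|+\delta_k,
\]
obtained via triangle inequality and contractivity in the third argument of $\hat\theta_i$, where $e_k\eqdef\|\theta_i^{k+1}-\hat\theta_i(z_i^k,\hat{\bs x}_{-i}^k,\theta_i^k)\|$ is the inexactness error and $\delta_k\eqdef\|\hat\theta_i(z_i^k,\hat{\bs x}_{-i}^k,\bar\theta_i)-\hat\theta_i(\bar x_i,\bar{\bs x}_{-i},\bar\theta_i)\|$ absorbs the drift of the proximal inputs at the target $\bar\theta_i$. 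To show $\delta_k\to 0$, I would invoke Berge's maximum theorem on the strongly convex minimization~\eqref{eq_exact_br} (using continuity of $L_i$ from Standing Assumption~\ref{ass_diff}(i) and continuity of $\hat f_i$ in $\theta$ from Standing Assumption~\ref{ass_param}(ii)) to get joint continuity of $\hat\theta_i$ in its first two arguments; noting that $\hat\theta_i$ depends on $z_i^k$ only through the averaged loss $L_i$, the hypotheses $\E_{\PP_i}[z_i^k]\to\bar x_i$ and $\hat{\bs x}_{-i}^k\to\bar{\bs x}_{-i}$ then directly give $\delta_k\to 0$.

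The stochastic error $e_k$ is handled with Standing Assumption~\ref{ass_step}: from~\eqref{eq_inexact_br} we have $\E[e_k^2\,|\,\mc F^k]\leq(\alpha_i^k)^2$ a.s., and since $\sum_k\alpha_i^k<\infty$ entails $\sum_k(\alpha_i^k)^2<\infty$, Markov's inequality together with the Borel--Cantelli lemma yield $e_k\to 0$ almost surely. Plugging $b_k\eqdef e_k+\delta_k\to 0$ into the recursion $u_{k+1}\leq a u_k+b_k$ with contraction factor $a\in(0,1)$ gives $\|\theta_i^k-\bar\theta_i\|\to 0$ almost surely via the standard splitting-the-geometric-sum argument, while $\lim_{k\to\infty}\E_{\PP_i}[\|\theta_i^k-\bar\theta_i\|]=0$ follows by the dominated convergence theorem using boundedness of $\Theta_i$. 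The main obstacle will be cleanly separating the two perturbation sources---the stochastic inexactness $e_k$ (tamed via Borel--Cantelli) and the deterministic input drift $\delta_k$ (tamed via Berge-type continuity)---inside a single contraction recursion, and carefully interpreting $\hat\theta_i$ as the deterministic minimizer of the averaged surrogate rather than a random functional of the raw noisy sample $z_i^k$.
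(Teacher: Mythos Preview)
Your proposal is correct and follows essentially the same route as the paper: both arguments exploit the $a$-contractivity of the exact proximal map (Standing Assumption~\ref{ass_contractive}) together with the summable accuracy sequence (Standing Assumption~\ref{ass_step}) to drive $\|\theta_i^{k+1}-\bar\theta_i\|$ to zero, and then identify the limit as a zero of $L_i$ via Standing Assumption~\ref{ass_diff}. The paper's proof is deliberately terse---it defers all details to \cite{lei2020synchronous} and \cite{fabiani2024active}---whereas you make the contraction recursion, the Borel--Cantelli step for $e_k$, and the Berge-type continuity argument for $\delta_k$ explicit; these are precisely the mechanics behind those citations, not a different strategy.
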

\begin{proof}
The fact that $\lim_{k \to \infty} \theta_i^k = \bar\theta_i$ for all $i \in \mc I$ follows analogously to \cite[Prop.~1]{lei2020synchronous} 
by using contractivity (Standing Assumption~\ref{ass_contractive}), the unbiased noise (Standing Assumption~\ref{ass_zeta}) and convexity of $\Theta_i$ (Standing Assumption~\ref{ass_param}) on $\|\theta_i^{k+1}-\bar\theta_i\|$, together with the vanishing property of $\{\alpha_i^k\}$ (Standing Assumption~\ref{ass_step}). With the same assumptions, it follows similarly that $\lim_{k\to\infty}\E_{\PP_i}\left[\norm{\theta_i^k-\bar\theta^k}\right]=0$ \cite[Prop.~2.(b)]{lei2020synchronous}.
The last statement, instead, follows analogously to \cite[Lemma 4.2]{fabiani2024active} from the properties of $\ell_i$ (Standing Assumption \ref{ass_diff}).
\end{proof}



Lemma~\ref{lemma:local_exactness} says that when all the ingredients involved in Algorithm~\ref{alg:learning_embedded_BR} converge, then the pointwise approximation of $f_i$ shall be exact
at ${\bs x}_{-i}$, i.e., $\hat f_i({\bs x}_{-i},\bar\theta_i)=\E_{\PP_i}[\tilde f_i(\bs x_{-i},\eta_i)]=f_i({\bs x}_{-i})$, with $\bar\theta_i\in\mc A_i(x_i,\bs{x}_{-i})$. We will then prove in Proposition~\ref{th:convergence} that the conditions required in Lemma~\ref{lemma:local_exactness} are verified.
To simplify the notation, let
$r(\bs x,\theta)=\sum_{i\in \mc I} \|x_i - \hat f_i(\bs x_{-i}, \theta_i) \|^2_2.$
Next, we impose some conditions on $r(\bs x,\theta)$:
\begin{standing}\label{ass_min}
The following conditions hold true:
\begin{itemize}
\item[(i)] For all $x \in \Omega, \theta \mapsto r(\bs x, \theta)$ is convex and differentiable;
\item[(ii)] For all $\theta \in \mathbb{R}^p, \bs x \mapsto r(\bs x, \theta)$ is convex and continuous;
\item[(iii)] For all $\theta \in \mathbb{R}^p$, the vector $\partial r(\bs x, \theta) / \partial \theta \in \mathbb{R}^p$ of partial derivatives is bounded \gls{wrt} $\bs x$. \hfill$\square$
\end{itemize}
\end{standing}
The following technical result characterizes the properties of the sequence of query points $\{\hat{\bs x}^k\}_{k\in\N}$ produced by the central entity in the second step \eqref{eq_min_norm} of Algorithm~\ref{alg:learning_embedded_BR}.
\begin{proposition}
	\label{prop:f}
	Let $\mc M(\tilde\theta)=\{\tilde{\bs x}\}$. Then the
	sequence $\{\hat{\bs x}^k\}_{k\in\N}$ generated by \eqref{eq_min_norm} is feasible, i.e., $\hat{\bs x}^k\in\Omega\cap\mc X$ for all $k\in\N$, and satisfies
	$
	\lim_{k\rightarrow\infty}\hat{\bs x}^k=\tilde{\bs x}.
	$
	\hfill$\square$
\end{proposition}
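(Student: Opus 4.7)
The plan is to split the argument into a straightforward feasibility part and a cluster-point argument for convergence, exploiting compactness of $\Omega\cap\mc X$ together with a Berge-type upper hemicontinuity property of the parameter-to-query map $\mc M(\cdot)$.

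Feasibility is immediate from construction: by definition in \eqref{eq:minimizers} every element of $\mc M(\theta^{k+1})$ lies in $\Omega\cap\mc X$. Since $\Omega\cap\mc X$ is nonempty, convex and compact (Standing Assumption~\ref{standing:standard}) and $\bs x\mapsto r(\bs x,\theta^{k+1})$ is continuous and convex (Standing Assumption~\ref{ass_min}.(ii)), the set $\mc M(\theta^{k+1})$ is nonempty, closed and convex. Hence the strongly convex quadratic program in \eqref{eq_min_norm} admits a unique solution $\hat{\bs x}^{k+1}$, which automatically satisfies $\hat{\bs x}^{k+1}\in\Omega\cap\mc X$ for all $k\in\N$.

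For the convergence claim, I would first invoke Lemma~\ref{lemma:local_exactness} to secure $\theta^k\to\tilde\theta$ componentwise. By compactness of $\Omega\cap\mc X$, every subsequence of $\{\hat{\bs x}^k\}$ admits a further sub-subsequence $\hat{\bs x}^{k_n}\to\bar{\bs x}\in\Omega\cap\mc X$. Optimality of $\hat{\bs x}^{k_n}$ in $\mc M(\theta^{k_n})$ yields
\[
r(\hat{\bs x}^{k_n},\theta^{k_n})\leq r(\bs y,\theta^{k_n}),\qquad\forall \bs y\in\Omega\cap\mc X.
\]
Passing to the limit $n\to\infty$, via joint (sequential) continuity of $r$, gives $r(\bar{\bs x},\tilde\theta)\leq r(\bs y,\tilde\theta)$ for every $\bs y\in\Omega\cap\mc X$, i.e., $\bar{\bs x}\in\mc M(\tilde\theta)=\{\tilde{\bs x}\}$. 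Hence $\bar{\bs x}=\tilde{\bs x}$, and since every accumulation point of the bounded sequence $\{\hat{\bs x}^k\}$ coincides with $\tilde{\bs x}$, the whole sequence must converge to $\tilde{\bs x}$.

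The main obstacle is the limit passage, which requires joint continuity of $r(\cdot,\cdot)$, whereas Standing Assumption~\ref{ass_min} only supplies separate continuity. I would close this gap by combining Standing Assumption~\ref{ass_min}.(iii) (boundedness of $\partial r/\partial\theta$ uniformly in $\bs x$) with Standing Assumption~\ref{ass_min}.(ii) (continuity of $\bs x\mapsto r(\bs x,\theta)$) via a triangle inequality: if $\|\partial r(\bs x,\theta)/\partial\theta\|\leq M$ uniformly in $\bs x$, then $|r(\bs x^n,\theta^n)-r(\bar{\bs x},\tilde\theta)|\leq M\|\theta^n-\tilde\theta\|+|r(\bs x^n,\tilde\theta)-r(\bar{\bs x},\tilde\theta)|$, which tends to zero whenever $\bs x^n\to\bar{\bs x}$ and $\theta^n\to\tilde\theta$, thereby justifying the limit step and closing the argument.
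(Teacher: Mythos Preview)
Your proof is correct and follows essentially the same route as the paper: both invoke Lemma~\ref{lemma:local_exactness} to obtain $\theta^k\to\tilde\theta$, then run a compactness/cluster-point argument showing every accumulation point of $\{\hat{\bs x}^k\}$ lies in $\mc M(\tilde\theta)=\{\tilde{\bs x}\}$. The only difference is cosmetic: the paper outsources the Berge-type upper hemicontinuity step and the final contradiction to \cite[Lemmas~3.3, 3.6, 3.7]{fabiani2024active}, whereas you spell it out directly and explicitly justify the limit passage via the uniform-in-$\bs x$ bound on $\partial r/\partial\theta$ from Standing Assumption~\ref{ass_min}.(iii).
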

\begin{proof}
In view of Lemma \ref{lemma:local_exactness}, $\lim_{k \to \infty} \theta_i^k = \tilde{\theta}_i$ a.s. for all $i \in \mc I$. The proof then follows from Standing Assumptions \ref{standing:standard} and \ref{ass_min} which imply that $\mc M(\theta^k)$ is a convex set \cite[Lemma 3.3]{fabiani2024active}. From the same assumptions then follows that the sequence $\{\hat{\bs x}^k\}_{k\in\N}$ is bounded and its cluster points $\bar{\bs x}$ belong to $\mc M(\tilde\theta)$ \cite[Lemma 3.6, 3.7]{fabiani2024active}. By contradiction we can instead show that it can not happen that $\bar{\bs x}\neq\tilde{\bs x}$ \cite[Lemma 3.7]{fabiani2024active}.
\end{proof}
We are now ready to state the asymptotic properties of the active learning-based technique summarized in Algorithm~\ref{alg:learning_embedded_BR}:
\begin{theorem}\label{th:convergence}
	Let $\mc M(\tilde \theta) = \{\tilde{\bs x}\}$.
	Then, $\lim_{k \to \infty} \|\E_{\PP}[\bs z^k-\hat{\bs x}^k]\|_2 = 0$, and the sequences $\{\bs x^k\}_{k \in \N}$ and $\{\hat{\bs x}^k\}_{k \in \N}$ generated by Algorithm~\ref{alg:learning_embedded_BR} converge to the same \gls{GNE} of the \gls{GNEP} in \eqref{eq_game}.
	\hfill$\square$
\end{theorem}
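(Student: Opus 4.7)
The plan is to identify the unique element $\tilde{\bs x}\in\mc M(\tilde\theta)$ delivered by Proposition \ref{prop:f} as a \gls{GNE} of \eqref{eq_game}, and then to deduce the convergence of $\E_\PP[\bs z^k-\hat{\bs x}^k]$ in $\|\cdot\|_2$ as a direct byproduct.

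First, I would combine Proposition \ref{prop:f} with Standing Assumption \ref{ass_zeta}: since $\E_{\PP_i}[z_i^k]=f_i(\hat{\bs x}_{-i}^k)$ and $\hat{\bs x}^k\to\tilde{\bs x}$, continuity of $f_i$ (Standing Assumption \ref{standing:standard}) gives $\E_{\PP_i}[z_i^k]\to f_i(\tilde{\bs x}_{-i})$. The discussion following \eqref{eq:minimizers} guarantees $(\E_{\PP_i}[z_i^k],\hat{\bs x}_{-i}^k)\in\Omega\cap\mc X$, and closedness of this set implies $(f_i(\tilde{\bs x}_{-i}),\tilde{\bs x}_{-i})\in\Omega\cap\mc X$. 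Lemma \ref{lemma:local_exactness} applied with $\bar x_i\eqdef f_i(\tilde{\bs x}_{-i})$ and $\bar{\bs x}_{-i}\eqdef\tilde{\bs x}_{-i}$ then yields $\theta_i^k\to\tilde\theta_i$ a.s., with $\tilde\theta_i\in\mc A_i(f_i(\tilde{\bs x}_{-i}),\tilde{\bs x}_{-i})$, i.e., $\hat f_i(\tilde{\bs x}_{-i},\tilde\theta_i)=f_i(\tilde{\bs x}_{-i})$ by Standing Assumption \ref{ass_diff}(iii).

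Second, I would upgrade this pointwise matching to the statement that the minimum of $r(\cdot,\tilde\theta)$ over $\Omega\cap\mc X$ equals zero. Since $\tilde{\bs x}$ is a minimizer, $r(\tilde{\bs x},\tilde\theta)=\sum_{i\in\mc I}\|\tilde x_i-f_i(\tilde{\bs x}_{-i})\|_2^2$, so it suffices to exhibit a feasible collective profile on which $r(\cdot,\tilde\theta)$ vanishes. I would invoke a Brouwer-type fixed-point argument on the continuous map $\bs x\mapsto(\hat f_i(\bs x_{-i},\tilde\theta_i))_{i\in\mc I}$ over the nonempty, convex, and compact set $\Omega\cap\mc X$, analogously to \cite[Lemma 3.7]{fabiani2024active}. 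This forces $\tilde x_i=f_i(\tilde{\bs x}_{-i})$ for all $i\in\mc I$, whence, by the fixed-point characterization following \eqref{eq_ar_br}, $\tilde{\bs x}$ is a \gls{GNE} of \eqref{eq_game}.

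Finally, the norm statement follows at once: each block of $\E_\PP[\bs z^k-\hat{\bs x}^k]$ reads $f_i(\hat{\bs x}_{-i}^k)-\hat x_i^k$, which tends to $f_i(\tilde{\bs x}_{-i})-\tilde x_i=0$ by continuity of $f_i$ and the just-established \gls{GNE} property, so the stacked vector vanishes in norm. The convergence of the companion sequence $\{\bs x^k\}_{k\in\N}$ to the same limit is obtained by applying the same reasoning to its defining recursion within Algorithm~\ref{alg:learning_embedded_BR}. The main obstacle I foresee is the second step: bridging pointwise exactness at the single point $\tilde{\bs x}_{-i}$ (a direct consequence of Lemma \ref{lemma:local_exactness}) with the existence of a feasible collective profile on which $r(\cdot,\tilde\theta)$ vanishes, since Standing Assumption \ref{ass_diff}(iii) only provides local matching and the presence of coupling constraints makes the construction of such a fixed point delicate.
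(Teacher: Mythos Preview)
Your overall strategy coincides with the paper's: use Proposition~\ref{prop:f} to obtain $\hat{\bs x}^k\to\tilde{\bs x}$, feed this together with Standing Assumption~\ref{ass_zeta} into Lemma~\ref{lemma:local_exactness} to get $\theta_i^k\to\tilde\theta_i$ and the pointwise exactness $\hat f_i(\tilde{\bs x}_{-i},\tilde\theta_i)=f_i(\tilde{\bs x}_{-i})$, and then argue that $\tilde{\bs x}$ is a \gls{GNE}. The paper's proof is terse on this last passage and simply defers it to \cite[Th.~4.5]{fabiani2024active}; you instead attempt to spell it out, which is where the difference lies.

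Your Step~2, however, has the gap you yourself anticipate. A Brouwer argument on $\bs x\mapsto(\hat f_i(\bs x_{-i},\tilde\theta_i))_{i\in\mc I}$ over $\Omega\cap\mc X$ requires this to be a self-map, and nothing in the standing assumptions guarantees that the surrogates $\hat f_i(\cdot,\tilde\theta_i)$ respect either local or coupling constraints---indeed, the affine proxies in \eqref{eq:affine_proxies} typically do not. The reference \cite[Lemma~3.7]{fabiani2024active} is used in the paper (see the proof of Proposition~\ref{prop:f}) for a continuity/cluster-point argument on the query sequence, not for a fixed-point construction on the surrogate \glspl{BR}, so it does not directly supply what you need here. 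The bridge from ``$\hat f_i(\tilde{\bs x}_{-i},\tilde\theta_i)=f_i(\tilde{\bs x}_{-i})$'' to ``$\tilde x_i=f_i(\tilde{\bs x}_{-i})$'' is precisely what the paper outsources to \cite[Th.~4.5]{fabiani2024active}; absent that reference, your sketch does not close. The norm statement in your final step is fine once the \gls{GNE} property is in hand.
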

\begin{proof}
It follows from Standing Assumptions \ref{standing:standard}, \ref{ass_diff} and \ref{ass_min} by noting that, in view of the consistency property in Lemma~\ref{lemma:local_exactness}, and by Proposition \ref{prop:f} the pointwise approximation shall be exact, namely each $\tilde{\theta}_i$ is so that, for all $i\in\mc I$, $\| \hat f_i(\tilde{\bs x}_{-i},\tilde{\theta}_i) - \E_{\PP_i}[\tilde f_i(\tilde{\bs x}_{-i},\eta_i)]\|_2 = 0$ \cite[Th.~4.5]{fabiani2024active}.
\end{proof}
Theorem \ref{th:convergence} establishes that the external entity achieves convergence to the true values, i.e., it predicts both an exact \gls{GNE} of the game and the \gls{BR} mappings, despite the possibly misleading information passed by the agents.

\section{Implementation details and simulation results}\label{sec:details_simulations}
We now discuss several implementation details related to Algorithm~\ref{alg:learning_embedded_BR} that will be employed to perform numerical experiments on a charging coordination problem for \glspl{EV}.

\subsection{Practical considerations}\label{subsec:implementation_detail}
A distinct feature of the proposed active learning-based scheme is represented by the inexact proximal step in \eqref{eq_inexact_br}, which can be accomplished as discussed in Remark~\ref{rem:inexact_proximal}. To this end, performing for instance the stochastic proximal gradient descent in \eqref{eq:proximal_gradient} requires one the availability of a batch of $S$ samples $\{z_i^{k,j}\}_{j=1}^S$ at every outer iteration $k$. The latter can be obtained by the central entity either probing the $i$-th \gls{BR} mapping $S$ times with the same $\hat{\bs x}^{k-1}_{-i}$, or producing synthetic samples. While the former may not represent a viable approach in a realistic case involving, e.g., human agents, the latter can be always pursued on the basis of the data collected up to iteration $k$, i.e., $\{z_i^j\}_{j=1}^k$. Among the simplest approaches, a \gls{MLE} method \cite{bishop2006pattern} allows one to estimate the (possibly time-varying) measurement noise covariance matrix $R_i^k$ using measurement residuals (innovations), i.e.,
$
 e_i^k=z_i^k- \hat f_i(\bs z^k_{-i}, \theta^k_i).
$
The likelihood function associated to the measurements $\{z_i^j\}_{j=1}^k$ given $R^k$ is then:
$
	\mc L_i(R_i^k) = \prod_{j=1}^{k} \exp \left(-\frac{1}{2} (e_i^j)^\top (P_i^j)^{-1} e_i^j \right)/\sqrt{|2\pi P_i^j|},
$
where $P_i^j$ denotes the measurement covariance at the $j$-th outer iteration. Taking the logarithm of the likelihood function, we obtain
$
	\log \mc L_i(R_i^k) = -\frac{k}{2} \log |P^k_i| - \frac{1}{2} \sum_{j=1}^{k} (e^j_i)^\top (P^j_i)^{-1} e^j_i,
$
which, in case the measurements are affected by Gaussian noise, allows one to estimate $R^k_i$ by maximizing $\log \mc L_i(R_i^k)$ \gls{wrt} $R_i^k$ itself. Thus, setting the derivative of the above to zero and solving for $R_i^k$ yields:
$
	\hat{R}^k_i = \frac{1}{k} \sum_{j=1}^{k} e^j_i (e^j_i)^\top,
$
which is the sample covariance of the residuals, and can be employed to produce synthetic samples $\{z_i^{k,j}\}_{j=1}^S$ for \eqref{eq:proximal_gradient} through, e.g., multivariate normal sampling. Specifically, one generates data $z_i^{k,j}=z_i^k+V_i^k \nu_i^j$, where $\nu_i^j\sim\mc N(0,I_{n_i})$ and $V_i^k$ is a matrix obtained from the Cholesky or singular value decomposition of $\hat{R}^k_i$. We will later exploit this empirical approach in \S \ref{sec:case study}. 

Note that the convergence property of our active learning-based scheme requires only a pointwise exact approximation of the \gls{BR} mappings held by the agents for the external observer to successfully accomplish the prediction task, despite noisy data. For this reason, as observed in \cite{fabiani2024active} it is convenient for the external entity to adopt affine \gls{BR} proxies $\hat f_i(\cdot,\theta_i)$, i.e.,
\begin{equation}\label{eq:affine_proxies}
	\hat f_i(\bs x_{-i}, \theta_i)=\Lambda_i\begin{bmatrix}\bs x_{-i}\\1\end{bmatrix},
\end{equation}
for $\Lambda_i \in \R^{n_i \times (n_{-i}+1)}$---note that $\theta_i$ is the vectorization of $\Lambda_i$, with $p_i=n_i(n_{-i}+1)$. In case one adopts a standard \gls{MSE} for the training, i.e., $\ell_i(z_i,\hat f_i(\bs x_{-i}, \theta_i))=\tfrac{1}{2}\normsq{z_i-\Lambda_i\left[\bs x_{-i}^\top ~~ 1\right]^\top}$ such a design choice allows to automatically satisfy Standing Assumption~\ref{ass_diff}, as well as the requirements in Standing Assumption~\ref{ass_min}. Besides all these technical motivations, affine \gls{BR} surrogates also yield important practical consequences. Specifically, each gradient in \eqref{eq:proximal_gradient} reads as $(\Lambda^t_i [(\hat{\bs x}_{-i}^k)^\top ~~ 1]^\top - z_i^{k,j} ) [(\hat{\bs x}_{-i}^k)^\top ~~ 1]$, while solving \eqref{eq:minimizers} turns out to be a constrained \gls{LS} problem, which is convex in view of Standing Assumption~\ref{standing:standard}.

\begin{table}[tb]
	\caption{Indirect control of \glspl{EV} -- Simulation parameters}
	\label{tab:sim_val}
	\centering
	\begin{tabular}{lll}
		\toprule
		Parameters  &  Description   & Value \\
		\midrule
		$T$ & Time interval & $14$\\
		$N$ & Number of \glspl{EV}  & $10$\\
		$Q_i=q_i I_T$& Degradation cost -- quadratic term & $q_i\sim\mc U(0.006,0.01)$\\
		$c_i$& Degradation cost -- affine term & $\sim\mc U(0.055,0.095)^T$\\
		$d$& Normalized inflexibility demand & from \cite[Fig.~1]{ma2011decentralized}\\
		$\rho_i$ & Local charging requirement &$\sim\mc U(1.2,1.8)$\\
		$\bar c_i$ & Upper bound - power injection &$0.25$\\
		$\bar c$ & Grid capacity &$0.2$\\
		$a$ & Inverse price elasticity of demand & $0.8$\\
		$b$ & Baseline price & $0.02$\\
		\midrule
		$\eta_i$ & Additive noise on each \gls{BR} & $\sim \mc N(0,0.1)$\\
		$\Theta_i$ & Parameters' set &$[-10,10]^{p_i}$\\
		$K$ & Number of iterations (Alg.~\ref{alg:learning_embedded_BR}) &$200$\\
		$\gamma^t$ & Step-size in \eqref{eq:proximal_gradient} &$10^{-3t}$\\
		$\mu$ & Proximal parameter &$10$\\
		$\bar t$ & Iterations performed in \eqref{eq:proximal_gradient}  &$10k$\\
		\bottomrule
	\end{tabular}
\end{table}
\subsection{Case study: Indirect control of smart grids}\label{sec:case study}
We test our technique on an indirect control problem faced by \glspl{DSO}, which design price signals enabling the energy flexibility offered by price-sensitive end-users \cite{d2022exploiting}.

In particular, we consider a set of $N$ \glspl{EV} populating a distribution grid \cite{ma2011decentralized,cenedese2019charging}, where every selfish agent aims at determining an optimal \gls{EV} charging schedule over a certain discrete time interval $\{1,\ldots,T\}$ by controlling the energy injection $x_i \in \R_{\ge 0}^T$. The underlying problem is typically modeled as a \gls{GNEP}, consisting of the following collection of mutually coupled optimization problems: 
\begin{equation}\label{eq:single_prob_EV}
	\forall i \in \mc I : \left\{
	\begin{aligned}
		& \underset{x_i}{\textrm{min}} &&  \|x_i\|^2_{Q_i}+c_i^\top x_i + (a (\sigma(\bs x) + d) + b \bsone_T)^\top x_i\\
		& \textrm{ s.t. } && \bsone_T^\top x_i \ge \rho_i,~x_i \in [0, \bar x_i]^T,~\sigma(\bs x) \le \bar c.
	\end{aligned}
	\right.
\end{equation}

Each private cost function is composed of two terms: $\|x_i\|^2_{Q_i}+c_i^\top x_i$, which models the battery degradation cost, and $(a (\sigma(\bs x) + d) + b \bsone_T)^\top x_i$, which is associated to the electricity pricing. Here, $\sigma(\bs x)$ denotes the aggregate demand of the whole population of \glspl{EV},  defined as $\sigma(\bs x) = \tfrac{1}{N} \sum_{i=1}^{N} x_i \in \R^{T}_{\ge0}$, where $a>0$ represents the inverse of the price elasticity of demand, $b >0$ the baseline price, and $d\in\R^T_{\ge 0}$ the normalized average inflexible demand. In addition, each user has to satisfy both local and shared constraints due for instance to a minimum charging amount over the interval, $\bsone_T^\top x_i \ge \rho_i \geq0$, a cap on the power injection $x_i \in [0, \bar x_i]^T$, or accounting for intrinsic grid limitations, i.e., $\sigma(\bs x)+d \in [0, \bar c]^T$. 

In this framework, an equilibrium strategy $\bs x^\star$, which produces the aggregate consumption $\tfrac{1}{N} \sum_{i=1}^{N} x^\star_i$, heavily depends on the values of $a$ and $b$.
It is then clear how a suitable design of $a$ and $b$, based on an accurate prediction of the resulting $\bs x^\star(a,b)$, allows for an efficient usage of the distribution grid. Thus, a \gls{DSO} is interested in making accurate forecasts on the aggregate electricity consumption of end-users in response to price-signals, aimed at enabling flexibility offered by the users themselves. On the other hand, the smart query process proposed in \cite{fabiani2024active} does not account for the possible malice of end-users, who may not be willing to provide correct information, are uncertain or even contradictory about it.


We conduct numerical experiments by using the values reported in Tab.~\ref{tab:sim_val}. Specifically, we assume the \gls{DSO} endowed with affine \gls{BR} proxies as in \eqref{eq:affine_proxies}, and additive noise affecting the agents' \glspl{BR}, i.e., $z_i=f_i(\bs x_{-i})+\eta_i$ for all $i\in\mc I$. While Algorithm~\ref{alg:learning_embedded_BR} is initialized as described in \cite[\S VI.A]{fabiani2024active}, we exploit the procedure in Remark~\ref{rem:inexact_proximal} for solving \eqref{eq_exact_br} with increasing accuracy at every outer iteration $k$. With this regard, we preliminary analyze the impact that the size of $S$ has on the computation of a \gls{GNE} with misleading information. For each agent, to generate synthetic samples $\{z_i^{k,j}\}_{j=1}^S$ at each iteration, we have adopted the \gls{MLE}-based approach discussed in \S \ref{subsec:implementation_detail}. Then, for each $S\in\{1,5,10,20,50\}$, we have generated $20$ numerical instances of \eqref{eq:single_prob_EV}, run \cite[Alg.~1]{fabiani2024active} with noise-free \gls{BR} samples for computing a reference \gls{GNE}, and then Algorithm~\ref{alg:learning_embedded_BR}. In Fig.~\ref{fig:S_impact}, which illustrates the box plot associated to the relative distance from a \gls{GNE} for each case, we observe that, as expected, a larger batch of samples allows for a better accuracy in the \gls{GNE} computation and reduces the related variance. On the other hand, we have also experienced a significant increase in the computational time, since each iteration of Algorithm~\ref{alg:learning_embedded_BR} with $S=1$ takes $2.79$[s] as worst-case average (i.e., with $k=200$), up to $52.8$[s] for $S=50$. Motivated by these considerations, we have then set $S=10$ and compared the query point sequences generated by Algorithm~\ref{alg:learning_embedded_BR} with a na\"ive implementation of \cite[Alg.~1]{fabiani2024active}. Also in this case, we have considered $20$ different numerical instances, with reference \gls{GNE} computed through \cite[Alg.~1]{fabiani2024active} by relying on noiseless data. From Fig.~\ref{fig:ev_convergence_comparison}, it is clear that, whether the procedure in Algorithm~\ref{alg:learning_embedded_BR} can cope with noisy \gls{BR} samples provided by the agents, running \cite[Alg.~1]{fabiani2024active} blindly with inexact data produces a non-convergent behavior.

\begin{figure}[!t]
	\centering
	\includegraphics[width=.95\columnwidth]{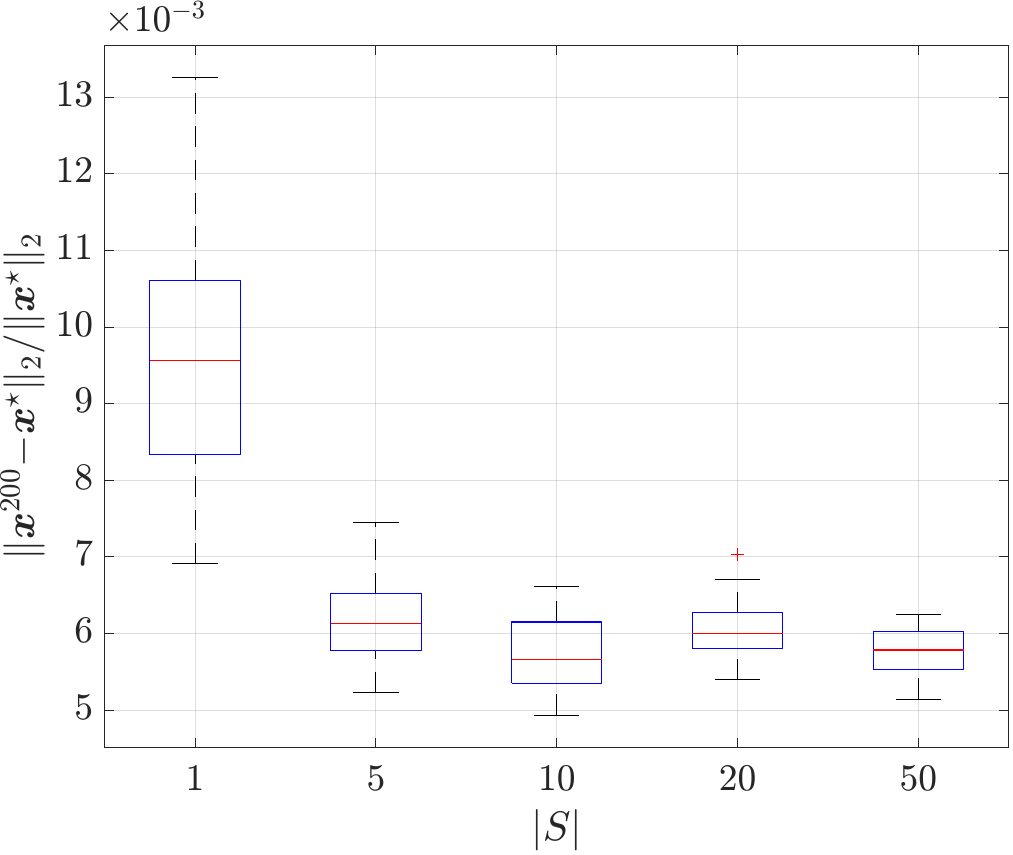}
	\caption{Relative distance between the \gls{GNE} computed by relying on misleading information through Algorithm~\ref{alg:learning_embedded_BR} (i.e., $\bs x^{200}$), and the one obtained with noiseless data (i.e., $\bs x^\star$), averaged over $20$ numerical instances of \eqref{eq:single_prob_EV}.}
	\label{fig:S_impact}
\end{figure}
\begin{figure}[!t]
	\centering
	\includegraphics[width=.95\columnwidth]{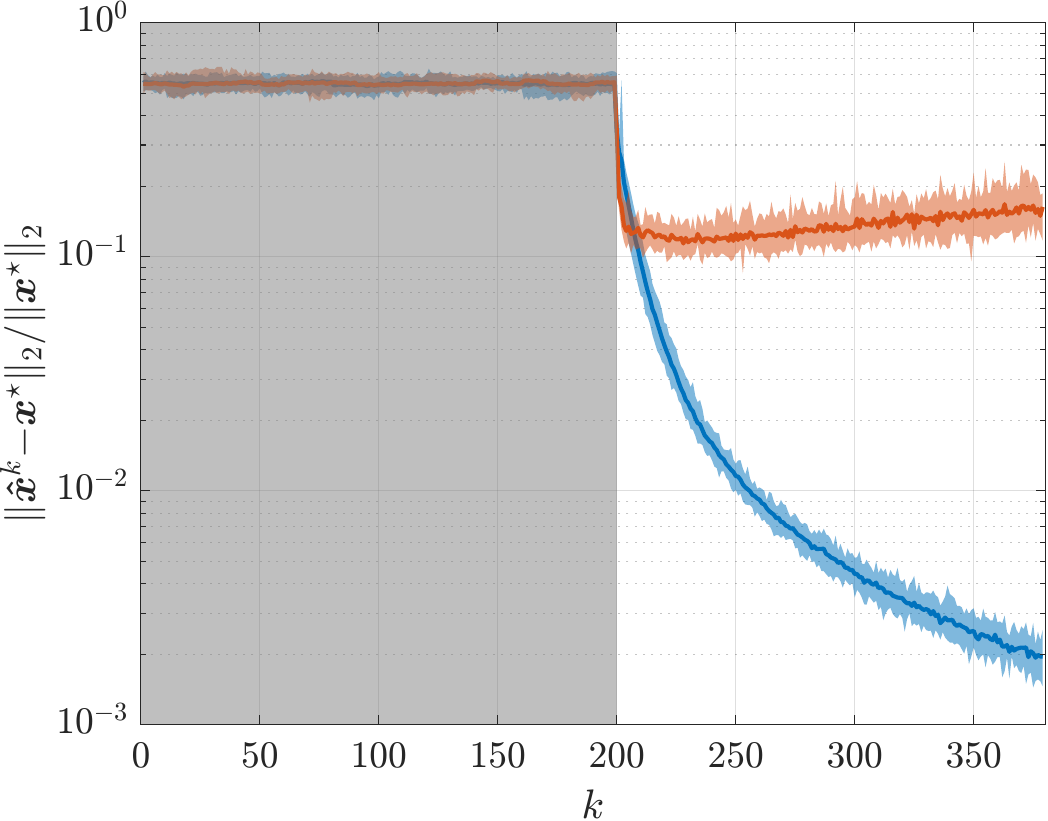}
	\caption{Relative distance sequence produced by Algorithm~\ref{alg:learning_embedded_BR} (solid blue line) and by \cite[Alg.~1]{fabiani2024active} exploiting noisy data (solid red line), averaged over $20$ numerical instances. The shaded colored areas represent the standard deviation over the different numerical trials, while the shaded black region corresponds to the random initialization of both procedures.}
	\label{fig:ev_convergence_comparison}
\end{figure}

\section{Conclusion}
We have proposed a novel procedure based on active learning to let an external observer learn faithful local proxies of \gls{BR} mappings privately held by a population of agents taking part to a \gls{GNEP}. With the goal of predicting a \gls{GNE} of the underlying game, we have adopted an inexact proximal update of those surrogates that allows to integrate possible misleading information provided by the agents. We have shown that this technique guarantees the convergence of the \gls{BR} estimates and, at the same time, of the overall active learning scheme, ensuring that the external entity succeeds in its prediction task.

\bibliographystyle{IEEEtran}
\bibliography{stocBR_prediction}

\end{document}